\DeclareMathOperator{\tr}{Tr}
\DeclareMathAlphabet{\mathpzc}{OT1}{pzc}{m}{it}
\theoremstyle{theorem}
\newtheorem{theorem}{Theorem}[section]
\newtheorem{lemma}[theorem]{Lemma}
\newtheorem{proposition}[theorem]{Proposition}
\newtheorem{conjecture}[theorem]{Conjecture}
\newtheorem*{nota}{Notation}
\theoremstyle{definition}
\newtheorem{definition}[theorem]{Definition}
\newtheorem{remark}[theorem]{Remark}
\newtheorem{example}[theorem]{Example}
\newcommand{\bequ}{\begin{equation}}
\newcommand{\nequ}{\end{equation}}
\newcommand{\benu}{\begin{enumerate}}
\newcommand{\nenu}{\end{enumerate}}
\newcommand{\bnota}{\begin{nota}}
\newcommand{\nnota}{\end{nota}}
\newcommand{\bdia}{\[\begin{diagram}}
\newcommand{\ndia}{\end{diagram}\]}
\newcommand{\bcas}{\begin{cases}}
\newcommand{\ncas}{\end{cases}}
\newcommand{\barr}{\left(\begin{array}}
\newcommand{\narr}{\end{array}\right)}
\newcommand{\disp}{\displaystyle}
\newcommand{\sub}{\subseteq}
\newcommand{\N}{\mathbb{N}}
\newcommand{\Z}{\mathbb{Z}}
\newcommand{\F}{\mathbb{F}}
\author[A. Ferraguti]{Andrea Ferraguti}
\address{Institut f\"{u}r Mathematik\\
Universit\"{a}t Z\"{u}rich\\
Winterthurerstrasse 190\\
8057 Z\"{u}rich, Switzerland\\
}
\email{andrea.ferraguti@math.uzh.ch}
\author[G. Micheli]{Giacomo Micheli}
\address{Mathematical Institute\\
University of Oxford\\
Andrew Wiles Building\\
Radcliffe Observatory Quarter\\
Woodstock Road\\
OX2 6GG Oxford, UK
}
\email{giacomo.micheli@maths.ox.ac.uk}
\date{}
\title{On the existence of infinite, non-trivial $F$-sets}
\begin{document}

\begin{abstract}
In this paper we prove a conjecture of J.\ Andrade, S.\ J.\ Miller, K.\ Pratt and M.\ Trinh, showing the existence of a non trivial infinite $F$-set over $\mathbb F_q[x]$ for every fixed $q$. We also provide the proof of a refinement of the conjecture, involving the notion of width of an $F$-set, which is a natural number encoding the complexity of the set.
\end{abstract}
\maketitle

\section{Introduction}

Throughout this paper, $q$ is a prime power and $I_q$ is the set of all monic, irreducible polynomials in $\F_q[x]$.
\begin{definition}
An \emph{$F$-set} is a subset $A$ of $I_q$ such that for any $f(x)\in A$, all monic irreducible polynomials dividing $f(x)-f(0)$ are also in $A$.
\end{definition}
It is easy to construct finite $F$-sets but, on the other hand, it is not a priori clear whether there exist infinite $F$-sets which do not coincide with $I_q$. We will call an $F$-set \emph{non-trivial} if it is different from $I_q$. In this paper we are going to address \cite[Conjecture~1.2]{and}. Let us recall it here for completeness.
\begin{conjecture}\label{congettura}
For every prime power $q$, there exist an infinite, non-trivial $F$-set.
\end{conjecture}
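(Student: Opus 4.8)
The plan is to construct an explicit infinite $F$-set that is manifestly not all of $I_q$, by exploiting a combinatorial feature of the operation $f \mapsto f(x) - f(0)$: the polynomial $f(x) - f(0)$ always has $x$ as a factor, and more generally its irreducible factors tend to have controlled degree relative to $\deg f$. The idea is to pick a family of monic irreducible polynomials $A$ defined by a condition on their degrees (say, degrees lying in some multiplicatively or additively closed set $S \subseteq \N$), together with all ``small'' irreducibles forced in by the $F$-set closure, and then verify that the closure condition does not force $A$ to swallow an irreducible of some excluded degree. Concretely, I would first analyze, for a monic irreducible $f$ of degree $n$, what the possible degrees of the monic irreducible factors of $f(x)-f(0)$ can be: clearly $\deg(f(x)-f(0)) = n$ (the leading term survives), so every such factor has degree $\le n$, and since $x \mid f(x)-f(0)$ the factorisation is $f(x)-f(0) = x^k \cdot g(x)$ with $\deg g \le n-1$. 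This already shows that starting from any single $f$ the closure stays inside polynomials of degree $\le \deg f$, hence the $F$-set generated by one element is finite — but we need an \emph{infinite} one, so we must take infinitely many generators while still avoiding some fixed irreducible.

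The key step is therefore to choose the set of allowed degrees $S$ so that it is infinite, its complement in $\N$ is infinite, and $S$ is ``downward closed enough under the factorisation process'': precisely, I want that whenever $n \in S$ and $d$ is the degree of a monic irreducible factor of $f(x)-f(0)$ for some irreducible $f$ of degree $n$, then $d \in S$. Because $d$ can be as large as $n-1$ (when $f(x)-f(0) = x \cdot g(x)$ with $g$ irreducible of degree $n-1$), naively this forces $S$ to contain $n-1$, then $n-2$, and so on, collapsing $S$ to an initial segment — which would make $A$ finite. The escape is to \emph{not} require closure on the nose but rather to enlarge $A$ to include \emph{all} irreducibles of every degree below some threshold, and only be selective above: put $A = \{\,f \in I_q : \deg f \le N\,\} \cup \{\, f \in I_q : \deg f \in S\,\}$ for a suitable infinite $S$ with infinite complement, and check the closure condition directly. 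For $f$ with $\deg f \le N$ the factors of $f(x)-f(0)$ have degree $\le N$ and lie in $A$; for $f$ with $\deg f = n \in S$, $n > N$, we need every irreducible factor of $f(x)-f(0)$ to be in $A$, i.e.\ to have degree $\le N$ or degree in $S$. This is where the real work lies, and it is the main obstacle: controlling the degrees of the irreducible factors of $f(x)-f(0)$ from below, not just from above.

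To overcome this obstacle I would use a \emph{counting / pigeonhole} argument rather than trying to control every $f$. Fix a target degree $m \notin S$ that we wish to keep out of the $F$-set; we must ensure that no $f \in A$ has $f(x)-f(0)$ divisible by an irreducible of degree $m$. The number of monic irreducibles $f$ of a given degree $n$ for which $f(x)-f(0)$ is divisible by a \emph{prescribed} irreducible $\pi$ of degree $m$ is governed by how many monic polynomials of degree $n$ with zero constant term are divisible by $\pi$, which is roughly $q^{n-m-1}$ for each residue, versus the total $\sim q^n/n$ irreducibles of degree $n$; so ``most'' irreducibles of degree $n$ are \emph{not} divisible by $\pi$ after subtracting the constant term — but we need \emph{all} of them, so pure counting is not enough either, and instead I would build $S$ adaptively: enumerate the ``forbidden'' irreducibles $\pi_1, \pi_2, \dots$ and, going up in degree, throw each degree $n$ into $S$ only if it creates no obstruction with the finitely many $\pi_j$ relevant so far, arguing that infinitely many degrees survive. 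The delicate point — and the part I expect to consume most of the argument — is proving that this greedy construction does not starve, i.e.\ that at each stage there remains an admissible degree to add to $S$; this should follow from the observation that $x \mid f(x)-f(0)$ always, combined with a careful bound showing that for $n$ large the obstruction coming from any fixed finite set of forbidden $\pi_j$'s can be avoided. The notion of \emph{width} mentioned in the abstract presumably quantifies exactly how many ``levels'' of this greedy construction are needed, and I would formulate the refinement as a statement that for each $w$ one can build an $F$-set whose selective part has this complexity parameter equal to $w$.
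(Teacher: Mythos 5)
Your approach has a fatal structural flaw, not just an unfinished step. Any construction of the form $A=\{f\in I_q:\deg f\le N\}\cup\{f\in I_q:\deg f\in S\}$ with $S$ infinite is forced to equal $I_q$, no matter how cleverly $S$ is chosen. The reason is Dirichlet's theorem for $\F_q[x]$: given \emph{any} monic irreducible $\pi(x)$ and any $a\in\F_q^*$, there are irreducible polynomials of the form $h(x)=g(x)\cdot x\pi(x)+a$ of every sufficiently large degree in the arithmetic progression determined by $x\pi(x)$; each such $h$ satisfies $\pi(x)\mid h(x)-h(0)$. Since your $A$ contains \emph{all} irreducibles of each degree in the infinite set $S$, it contains such an $h$ for every $\pi$, and the $F$-set closure then forces $\pi\in A$. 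Hence $A=I_q$ and is trivial. (The paper makes exactly this observation in Section~3 to show that an infinite $F$-set is either $I_q$ or has infinite complement; it kills any ``select by degree'' strategy.) Your proposed greedy repair does not escape this: the obstruction created by admitting a degree $n$ into $S$ falls on the \emph{smaller} degrees, which your procedure has already committed to, and on the excluded irreducibles $\pi_j$, each of which divides $h-h(0)$ for irreducibles $h$ of infinitely many degrees, so no infinite $S$ with infinite excluded set can survive. Separately, even within your own framework you never actually prove the claim that ``the greedy construction does not starve''; the counting heuristic you give (most $f$ of degree $n$ avoid a fixed $\pi$) is, as you yourself note, irrelevant, because the closure condition quantifies over \emph{all} $f\in A$.

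The successful strategy is the opposite of yours: instead of taking all irreducibles of selected degrees and trying to control the factors of $f(x)-f(0)$ statistically, one takes an extremely sparse family for which $f(x)-f(0)$ factors trivially by design. Concretely, for $q\neq 2,3$ the paper picks a generator $\alpha$ of $\F_q^*$ and a suitable prime $l\mid q-1$, and shows via the classical order criterion (Theorem~\ref{order}) that $x^{l^k}-\alpha$ is irreducible for every $k$; then $(x^{l^k}-\alpha)-(-\alpha)=x^{l^k}$ has $x$ as its only irreducible factor, so $A=\{x\}\cup\{x^{l^k}-\alpha\}_{k\in\N}$ is an infinite $F$-set that omits, for instance, every other linear polynomial. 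The cases $q=2,3$ are handled by separate constructions (a citation and an iterated quadratic whose compositions stay irreducible). If you want to salvage your write-up, you should abandon the degree-selection idea entirely and look for families $\{f_k\}$ where the irreducible factors of $f_k(x)-f_k(0)$ are explicitly computable and stay inside the family.
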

In \cite{and} the authors provide nice constructions which solve the conjecture in the special cases of $q$ prime and congruent to $2$ or $5$ modulo $9$. In what follows we will prove both the conjecture and a stronger statement, which takes into account the cardinality of the prime divisors of elements of the form $f(x)-f(0)$, for $f$ in the $F$-set.

The paper is structured as follows.
In Section~\ref{conjecture}, we outline a proof of this conjecture, by explicitly exhibiting infinite non-trivial $F$-sets, sieving out the cases in terms of the factorization of $q-1$. The examples we produce are in some sense the easiest possible. This is made precise in Section~\ref{width}, where we introduce the notion of \emph{width} of an $F$-set. The width of an $F$-set is an element of $\N\cup\{\infty\}$ which measures the ``complexity'' of the $F$-set itself. For example, an $F$-set has width $0$ if and only if it is finite, whereas the $F$-sets constructed in Section~\ref{conjecture} have width $1$. Some properties of the width are proved in Proposition~\ref{basic_prop}. Section~\ref{technology} contains two technical lemmata that enable us to build $F$-sets of width $2$ and $\infty$.
Explicit examples of such sets are constructed in 
Section~\ref{main_section}.
We end the paper with a new Conjecture \ref{conjwithwidth} involving the notion of width of an $F$-set.

\section{Constructing infinite \texorpdfstring{$F$}{}-sets}\label{conjecture}
In this section, we  explain how to construct simple examples of infinite, non-trivial $F$-sets in $\F_q[x]$ for every prime power $q$. Recall that if $f(x)\in\F_q[x]$ is such that $f(0)\neq 0$, the \emph{order} of $f$ is defined as the smallest integer $e$ such that $f(x)\mid x^e-1$. See \cite[Lemma~3.1]{lid} for a proof of the existence of the order. In particular, let us recall 
\citep[Theorem~3.3]{lid} for completeness.
\begin{theorem}\label{root}
Let $f\in \F_q[x]$ be an irreducible polynomial of degree $m$ such that $f(0)\neq 0$ and let $\alpha\in \F_{q^m}$ be one of its roots. Then the order of $f$ equals the order of $\alpha$ in the multiplicative group $\F_{q^m}^*$.
\end{theorem}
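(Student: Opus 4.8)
The plan is to exploit the fact that $f$, being irreducible over $\F_q$ with root $\alpha$, is (up to a unit scalar) the minimal polynomial of $\alpha$, and then to play the two divisibility directions against each other. First I would observe that since $f(0)\neq 0$ the root $\alpha$ is nonzero, so it genuinely lies in the multiplicative group $\F_{q^m}^*$; write $d$ for its order there. I would also note, for the record, that $\ord(f)$ is well-defined precisely because $f(0)\neq 0$ forces $\gcd(f(x),x)=1$, so $x$ is invertible in the finite ring $\F_q[x]/(f)$ and some power of $x$ is congruent to $1$; this is \cite[Lemma~3.1]{lid} and I would simply cite it.

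The key algebraic input is that, since $(f)$ is a maximal ideal of $\F_q[x]$, the quotient $\F_q[x]/(f)\cong\F_q(\alpha)$ is a field and the kernel of the evaluation map $g\mapsto g(\alpha)$ is exactly $(f)$; equivalently, $f(x)\mid g(x)$ for every $g\in\F_q[x]$ with $g(\alpha)=0$. Applying this to $g(x)=x^{d}-1$, which satisfies $g(\alpha)=\alpha^{d}-1=0$, gives $f(x)\mid x^{d}-1$, hence $\ord(f)\le d$.

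For the reverse inequality, suppose $f(x)\mid x^{e}-1$ for a positive integer $e$ — in particular $e=\ord(f)$ qualifies. Evaluating at $\alpha$ yields $\alpha^{e}=1$, so by the definition of the order of a group element $d\mid e$, and in particular $d\le e$. Taking $e=\ord(f)$ gives $d\le\ord(f)$. Combining the two inequalities yields $\ord(f)=d=\ord(\alpha)$, which is the claim.

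I do not expect a genuine obstacle here: the statement is classical and the argument is essentially the minimal-polynomial dictionary between polynomial divisibility and vanishing at $\alpha$. The only points that warrant a word of care are the observation that $\alpha\neq 0$ (so that "order of $\alpha$ in $\F_{q^m}^*$" makes sense) and the implicit use that $\ord(f)$ exists at all, both of which follow immediately from $f(0)\neq 0$.
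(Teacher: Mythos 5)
Your argument is correct and complete: the minimal-polynomial dictionary ($f\mid g$ iff $g(\alpha)=0$) applied to $x^d-1$ and $x^e-1$ gives both inequalities, and you rightly flag that $f(0)\neq 0$ is what makes both $\ord(f)$ and $\ord(\alpha)$ well-defined. The paper does not prove this statement at all — it simply cites it as \cite[Theorem~3.3]{lid} — and your proof is precisely the standard one found there, so there is nothing to reconcile.
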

The following is another classical result (see \cite[Theorem 3.35]{lid}) which will be useful later on.
\begin{theorem}\label{order}
Let $q$ be a prime power. Let $f(x)\in \F_q[x]$ be an irreducible polynomial of degree $m$ and order $e$. Let $t$ be a positive integer such that the prime factors of $t$ divide $e$ but not $(q^m-1)/e$. Assume also that $q^m \equiv 1 \bmod 4$ if $t \equiv 0 \bmod 4$. Then $f(x^t)$ is irreducible.
\end{theorem}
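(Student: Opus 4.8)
The plan is to reduce the irreducibility of $f(x^t)$ to that of a single binomial over the extension field $\F_{q^m}$, and then to apply the classical criterion for irreducibility of binomials. We may clearly assume $f$ monic and $t\geq 2$, the case $t=1$ being trivial. Set $g(x)=f(x^t)\in\F_q[x]$, a monic polynomial of degree $mt$, and fix a root $\beta$ of $g$ in an algebraic closure of $\F_q$. Then $\alpha:=\beta^t$ is a root of $f$; since $f$ is irreducible of degree $m$ we get $\F_q(\alpha)=\F_{q^m}$, and since $f(0)\neq 0$ we have $\alpha\neq 0$, so by Theorem~\ref{root} the multiplicative order of $\alpha$ in $\F_{q^m}^*$ is exactly $e$. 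Moreover $\F_{q^m}=\F_q(\alpha)=\F_q(\beta^t)\subseteq\F_q(\beta)$, whence $\F_q(\beta)=\F_{q^m}(\beta)$ and $[\F_q(\beta):\F_q]=m\cdot[\F_{q^m}(\beta):\F_{q^m}]$. Since $\beta$ is a root of the monic polynomial $g$ of degree $mt$, it will suffice to prove that $[\F_{q^m}(\beta):\F_{q^m}]=t$: the minimal polynomial of $\beta$ over $\F_q$ would then have degree $mt$, hence coincide with $g$, which is therefore irreducible.

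Now $\beta$ is a root of $x^t-\alpha\in\F_{q^m}[x]$, so the equality $[\F_{q^m}(\beta):\F_{q^m}]=t$ is equivalent to the irreducibility of $x^t-\alpha$ over $\F_{q^m}$. At this point I would invoke the classical binomial irreducibility criterion over finite fields (a special case of the Vahlen--Capelli theorem; see e.g.\ \cite[Theorem~3.75]{lid}): for a finite field $\F$ with $|\F|=Q$ and an element $a\in\F^*$ of multiplicative order $d$, the binomial $x^t-a$ is irreducible over $\F$ if and only if (i) each prime factor of $t$ divides $d$ but not $(Q-1)/d$, and (ii) $Q\equiv 1\bmod 4$ whenever $t\equiv 0\bmod 4$. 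Applying this with $\F=\F_{q^m}$, $Q=q^m$, $a=\alpha$ and $d=e$, conditions (i) and (ii) become verbatim the hypotheses of the theorem. Hence $x^t-\alpha$ is irreducible over $\F_{q^m}$, and the reduction above yields the claim.

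The only substantive ingredient beyond this bookkeeping is the binomial criterion itself, which is the point I would expect to cost real work if it is not simply cited. Its proof proceeds by induction on the number of prime factors of $t$ counted with multiplicity. The base case $t=r$ prime amounts to showing that $x^r-a$ is irreducible over $\F$ if and only if $a\notin(\F^*)^r$, and then translating the statement $a\notin(\F^*)^r$ into the divisibility conditions $r\mid d$ and $r\nmid(Q-1)/d$ using that $\F^*$ is cyclic of order $Q-1$. The inductive step passes from $x^t-a$ to $x^{rt}-a$ by examining the roots of $x^r-\gamma$ for $\gamma$ a root of $x^t-a$; the one genuinely delicate case is $r=2$, where $x^4-a$ may factor even though $a$ is not a square (precisely when $a\in-4(\F^*)^4$), and this is exactly what forces the supplementary hypothesis ``$q^m\equiv 1\bmod 4$ when $4\mid t$''. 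With that lemma in hand the theorem is immediate; one should only note in passing that the hypothesis $f(0)\neq 0$ is genuinely needed, since otherwise $f(x)=x$ and $f(x^t)=x^t$ is reducible, and it is used precisely to ensure $\alpha\neq 0$ so that the order of $\alpha$ is defined.
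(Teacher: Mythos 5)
Your argument is correct: the reduction of the irreducibility of $f(x^t)$ to that of the binomial $x^t-\alpha$ over $\F_{q^m}$ (with $\alpha$ a root of $f$, whose multiplicative order is $e$ by Theorem~\ref{root}), followed by the classical binomial criterion, is exactly the standard derivation, and your bookkeeping with the tower $\F_q\subseteq\F_{q^m}\subseteq\F_q(\beta)$ is sound. The paper itself offers no proof of this statement --- it is quoted verbatim as \cite[Theorem~3.35]{lid} --- and your proof coincides with the one given there, so there is nothing to flag beyond noting that the binomial criterion (\cite[Theorem~3.75]{lid}) is legitimately cited rather than reproved.
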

Finally, we recall another very nice result \cite[Proposition~2.3]{bos} by  Nigel Boston and Rafe Jones which characterizes stable degree $2$ polynomials. We state it in a slightly more specific form, which can be adapted from \citep[Theorem 2.2]{jon}.
\begin{theorem}\label{thm_composition}
Let $\F_q$ be a finite field of characteristic $\neq 2$, let $\gamma,m\in \F_q$ and let $f(x)\coloneqq(x-\gamma)^2+\gamma+m\in \F_q[x]$. For every $k\in\N$, let $f_k(x)$ denote the $k$-th fold composition of $f$ with itself. Then $f_k(x)$ is irreducible if and only if the set $\{-f(\gamma),f_2(\gamma),f_3(\gamma),\ldots,f_{k-1}(\gamma)\}$ does not contain any square.
\end{theorem}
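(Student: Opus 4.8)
I would prove the criterion by induction on $k$, the engine being a Capelli-type irreducibility lemma for quadratic pull-backs over a finite field of odd characteristic; this is in essence the argument underlying \cite[Theorem~2.2]{jon}. First record the set-up: write $f(x)=(x-\gamma)^2+c$ with $c:=f(\gamma)=\gamma+m$, note that $\gamma$ is the unique critical point of $f$, that $f_k=f_{k-1}\circ f$ and hence $\deg f_k=2^k$, and that $f_k(\gamma)=f_{k-1}(f(\gamma))=f_{k-1}(c)$.

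The lemma I would isolate is: \emph{if $g\in\F_q[x]$ is monic irreducible of degree $d\geq 1$, then $g\circ f$ is irreducible over $\F_q$ if and only if $(-1)^d g(c)$ is a non-square in $\F_q$.} To prove it, fix a root $\beta$ of $g$, so that $\F_q(\beta)=\F_{q^d}$; since $g\circ f(x)=g((x-\gamma)^2+c)$, the roots of $g\circ f$ in $\overline{\F_q}$ are the elements $\gamma\pm\sqrt{\beta'-c}$ with $\beta'$ running over the roots of $g$. Because $\gamma\in\F_q$ and $\beta=(\sqrt{\beta-c})^{2}+c$, the field $\F_q(\gamma+\sqrt{\beta-c})=\F_q(\sqrt{\beta-c})$ contains $\F_{q^d}$ and has degree $2d$ over $\F_q$ precisely when $\beta-c$ is a non-square in $\F_{q^d}^{*}$; in the contrary case each $\beta'-c$ is also a square (Frobenius permutes the $\beta'$ and preserves squares in $\F_{q^d}$), so all $2d$ roots of $g\circ f$ lie in $\F_{q^d}$ and $g\circ f$ is reducible. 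Hence $g\circ f$ is irreducible if and only if $\beta-c$ is a non-square in $\F_{q^d}$. Finally, since $\car\F_q\neq 2$ the group $\F_{q^d}^{*}$ has even order, and the identity $N_{\F_{q^d}/\F_q}(u)^{(q-1)/2}=u^{(q^d-1)/2}$ shows that $u\in\F_{q^d}^{*}$ is a non-square if and only if $N_{\F_{q^d}/\F_q}(u)$ is a non-square in $\F_q$; applying this with $u=\beta-c$ and using $N_{\F_{q^d}/\F_q}(\beta-c)=\prod_{\beta'}(\beta'-c)=(-1)^d g(c)$ (valid since $g$ is monic with root multiset $\{\beta'\}$) finishes the lemma. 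The separability of $g$ and of $g\circ f$ needed above is automatic over the perfect field $\F_q$.

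Granting the lemma, the induction runs as follows. Base case $k=1$: the monic quadratic $f=(x-\gamma)^2+f(\gamma)$ has a root in $\F_q$ if and only if $-f(\gamma)$ is a square, so $f$ is irreducible if and only if $-f(\gamma)$ is a non-square. Inductive step $k\geq 2$: if $f_{k-1}=p\cdot q$ is a nontrivial factorization then $f_k=(p\circ f)\cdot(q\circ f)$ is a nontrivial factorization of $f_k$, so irreducibility of $f_k$ forces that of $f_{k-1}$; and when $f_{k-1}$ is irreducible, its degree $2^{k-1}$ is even, so the lemma with $g=f_{k-1}$ gives that $f_k$ is irreducible if and only if $(-1)^{2^{k-1}}f_{k-1}(c)=f_{k-1}(c)=f_k(\gamma)$ is a non-square. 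Therefore $f_k$ is irreducible if and only if $f_{k-1}$ is irreducible and $f_k(\gamma)$ is a non-square in $\F_q$, and unwinding this recurrence down to the base case shows that $f_k$ is irreducible if and only if none of $-f(\gamma),\,f_2(\gamma),\,f_3(\gamma),\,\ldots,\,f_k(\gamma)$ is a square in $\F_q$, which is the claimed equivalence.

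The only real work I anticipate is inside the lemma: keeping the sign $(-1)^d$ straight (it disappears whenever $d$ is even, hence throughout the inductive step, but it is exactly what puts the minus sign on the leading term $-f(\gamma)$ in the base case), and cleanly invoking the two standard facts used above — the equivalence between being a non-square in $\F_{q^d}$ and having a non-square norm in $\F_q$, and automatic separability over finite fields — together with pinpointing where $\car\F_q\neq 2$ is genuinely needed: in characteristic $2$ a quadratic pull-back $g\circ f$ is a perfect square, and the criterion fails completely.
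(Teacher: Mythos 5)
Your proof is correct, but it does substantially more than the paper: the paper's entire proof is a one-line reduction to \cite[Theorem~2.2]{jon}, observing only that an element of a finite extension of $\F_q$ is a square if and only if its norm down to $\F_q$ is a square, whereas you reconstruct the underlying Capelli-type argument from scratch. Your key lemma --- $g\circ f$ irreducible iff $(-1)^{\deg g}g(f(\gamma))$ is a non-square, proved by adjoining $\sqrt{\beta-c}$ and converting ``non-square in $\F_{q^{d}}$'' into ``non-square norm in $\F_q$'' --- is exactly the content the paper outsources to Jones, so the mathematical substance agrees; what your version buys is self-containedness and explicit bookkeeping of where the sign $(-1)^{d}$ and the hypothesis $\car\F_q\neq 2$ enter. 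One substantive point in your favour: your induction yields that $f_k$ is irreducible iff none of $-f(\gamma),f_2(\gamma),\ldots,f_k(\gamma)$ is a square, with the list ending at $f_k(\gamma)$, while the theorem as printed stops at $f_{k-1}(\gamma)$. Your indexing is the correct one and matches the standard Jones--Boston formulation: over $\F_5$ the polynomial $f=x^2+2$ has $-f(0)=3$ a non-square, yet $f_2=x^4+4x^2+1=(x^2+2x-1)(x^2-2x-1)$ is reducible because $f_2(0)=1$ is a square, so the truncated list cannot characterize irreducibility of $f_2$. The paper's statement thus carries an off-by-one slip; its later application (stability of $x^2-2$ over $\F_3$) is unaffected because there the entire adjusted critical orbit is checked. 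The only loose end in your write-up is the degenerate case $g(f(\gamma))=0$, which under the convention that $0$ is a square remains consistent with your lemma and in any case never arises in the inductive step, since an irreducible $f_{k-1}$ of degree $2^{k-1}>1$ has no root in $\F_q$.
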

\begin{proof}
In the statement of \citep[Theorem 2.2]{jon}, just observe that an element of a finite field is a square if and only if its norm is a square.
\end{proof}
\begin{theorem}\label{proof_of_conj}
Let $q$ be a prime power. Then there exists an infinite, non-trivial $F$-set in $\F_q[x]$.
\end{theorem}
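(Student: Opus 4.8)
The plan is to produce, for every prime power $q$, an explicit infinite $F$-set that is not all of $I_q$, organizing the argument according to the factorization of $q-1$. The observation used throughout is the elementary remark that if $g\in I_q$ has $g(0)\neq 0$ then $x\mid g(x)-g(0)$, so $x$ lies in every infinite $F$-set, and the simplest way to force a candidate set $A$ to be an $F$-set is to choose its elements $g$ so that $g(x)-g(0)$ factors only into polynomials already in $A$. The natural dividing line is whether $\F_q$ carries infinitely many irreducible binomials, and this separates the ``generic'' case $q\notin\{2,3\}$ from the two sporadic fields $\F_2$ and $\F_3$.

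\emph{Generic case ($q\notin\{2,3\}$).} Fix a generator $b$ of $\F_q^*$, so $x-b$ has order $q-1$. If $q-1$ has an odd prime divisor $\ell$, then Theorem~\ref{order} applied to $f=x-b$ with $t=\ell^k$ shows $x^{\ell^k}-b$ is irreducible for all $k\ge 0$: the only prime dividing $t$ is $\ell$, which divides $q-1$ but not $(q-1)/(q-1)=1$, and the hypothesis on $t\bmod 4$ is vacuous. If $q-1$ has no odd prime divisor, then $q-1=2^s$ with $s\ge 2$ (since $q\neq 2,3$), and the same argument with $\ell=2$ works, the condition $q\equiv 1\bmod 4$ now holding because $4\mid q-1$. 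As $x^{\ell^k}-b$ has constant term $-b$, we have $(x^{\ell^k}-b)-(-b)=x^{\ell^k}$, whose only irreducible divisor is $x$; hence $A\coloneqq\{x\}\cup\{x^{\ell^k}-b:k\ge 0\}$ is an infinite $F$-set, and it is non-trivial since $x-1\notin A$ (as $b\neq 1$).

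\emph{Sporadic cases.} For $q=3$ I take $f(x)\coloneqq x^2+1\in\F_3[x]$, i.e.\ $\gamma=0$ and $m=1$ in Theorem~\ref{thm_composition}; since $-f(0)=2$ and $f_k(0)=2$ for all $k\ge 2$, the set $\{-f(\gamma),f_2(\gamma),f_3(\gamma),\ldots\}$ equals $\{2\}$, which contains no square, so every iterate $f_k$ is irreducible. Combining $f_{k+1}(x)-f_{k+1}(0)=(f_k(x)-f_k(0))(f_k(x)+f_k(0))$ with $f_1(x)+f_1(0)=x^2+2=(x-1)(x+1)$ and $f_k(x)+f_k(0)=f_k(x)+2=(f_{k-1}(x))^2$ for $k\ge 2$, an induction on $k$ shows every irreducible divisor of $f_k(x)-f_k(0)$ lies in $\{x,x-1,x+1\}\cup\{f_j:1\le j\le k-2\}$; hence $A\coloneqq\{x,x-1,x+1\}\cup\{f_k:k\ge 1\}$ is an infinite $F$-set, non-trivial because $A$ contains no irreducible of degree $3$. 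For $q=2$ I take $p(x)\coloneqq x^2+x+1$, of degree $2$ and order $3$: Theorem~\ref{order} with $t=3^k$ shows each $p(x^{3^k})=x^{2\cdot 3^k}+x^{3^k}+1$ is irreducible, and from the factorization $x^{3^k}-1=(x+1)\prod_{j=0}^{k-1}p(x^{3^j})$ into distinct irreducibles one gets that $p(x^{3^k})-1=x^{3^k}(x^{3^k}-1)$ has all its irreducible divisors in $\{x,x+1\}\cup\{p(x^{3^j}):0\le j\le k-1\}$. Hence $A\coloneqq\{x,x+1\}\cup\{p(x^{3^k}):k\ge 0\}$ is an infinite $F$-set, again non-trivial since $A$ contains no irreducible of degree $3$ (the field $\F_2$ is in any case among the cases settled in \cite{and}).

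The binomial part is routine, and I expect the genuine difficulty — the main obstacle — to be the sporadic fields $\F_2$ and $\F_3$: there is no infinite family of irreducible binomials over these, so one must instead guess a self-referential family (polynomials whose shift by their own constant term factors back into the family being built) and then push through the bookkeeping that makes closure work, namely the irreducibility of every member — via Theorems~\ref{thm_composition} and~\ref{order} — together with the exact factorization of each $g(x)-g(0)$.
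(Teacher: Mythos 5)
Your proposal is correct and follows essentially the same route as the paper: for $q\neq 2,3$ an infinite tower of irreducible binomials $x^{\ell^k}-b$ over a generator $b$ of $\F_q^*$ via Theorem~\ref{order}, and for $q=3$ the iterates of $x^2+1=x^2-2$ with the same closed-form factorization of $f_k(x)-f_k(0)$. The only divergence is at $q=2$, where the paper simply cites \cite[Theorem~1.1]{and} while you give a self-contained construction from the cyclotomic tower $x^{3^k}-1=(x+1)\prod_{j<k}p(x^{3^j})$, which checks out and makes the argument fully explicit.
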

\begin{proof}
When $q=2$, a non-trivial, infinite $F$-set is constructed in \cite[Theorem~1.1]{and}. 
Let now $q=3$ (or, more generally, suppose that $2$ is not a square in $\F_q$). Let $f(x)=x^2-2\in \F_3[x]$, and define the following sequence: $f_0(x)=x$ and $f_k(x)\coloneqq f(f_{k-1}(x))$ for every $k\in \N$. We claim that the set $A\coloneqq\{x,x+2,x-2\}\cup\{f_k(x)\}_{k\geq 1}$ is an infinite $F$-set. First, we have to check that $f_k(x)$ is irreducible for every $k$. This follows directly from Theorem~\ref{thm_composition} as $-f(0)=f_k(0)=2$ for every $k\geq 2$. Next, the reader should notice that $f_k(0)$ can be easily controlled for any $k$: $f_0(0)=0$, $f_1(0)=-2$ and finally $f_k(0)=2$ for any $k\geq 2$, as already observed.

We claim now that for $k\geq 2$ the factorization of $f_k(x)-2$ can be controlled as follows:
$$f_k(x)-2=(x-2)(x+2)f_0(x)^2\cdots f^2_{k-2}(x) \mbox{ for } k\geq 2.$$ 
Let us show this by induction. For $k=2$ we have $f_2(x)-2=(x^2-2)^2-4=(x-2)(x+2)f_0(x)^2$. Let the claim be true for $k$. We have that
\begin{align*}
f_{k+1}(x)-2=f^2_{k}(x)-4=(f_k-2)(f_k+2)=\\(x-2)(x+2)f_0(x)^2\cdots f^2_{k-2}(x)(f_{k-1}(x)^2-2+2),
\end{align*}
which completes the proof. Hence, $A$ is an infinite $F$-set and it is non-trivial as only three elements of $A$ have odd degree.

Finally, let $q$ be a prime power different from $2$ and $3$. Let $\alpha$ be a generator of the multiplicative group $\F_q^*$, and let $f(x)=x-\alpha$. Then the order of $f(x)$ is clearly $q-1$. Now pick a prime $l$ dividing $q-1$ in the following way: if $q\equiv 3\bmod 4$, choose $l$ to be odd, otherwise choose any $l$. Then by Theorem \ref{order}, the polynomial $f(x^{l^k})$ is irreducible for every $k\in \N$. The set $A\coloneqq \{x,f(x^{l^k})\}_{k\in \N}$ is therefore an infinite, non-trivial $F$-set.
\end{proof}

The reader should notice that the same type of strategy to address the analogous problem over the integers (for additional details see \cite[Section~1]{and}) is beyond the reach of known results. 
In fact, in order to apply the same strategy as in the polynomial case, one would require in particular the existence of a polynomial of the form $f(x)=kx^2+1$, where $k\in \N$, such that $f(n)$ is prime for infinitely many $n$. Unfortunately, the existence of polynomials in $\Z[x]$ of degree $>1$ which assume infinitely many prime values is still an open question (see for example \cite{guy}).

\section{\texorpdfstring{$F$}{}-sets and their width}\label{width}
The examples constructed in the proof of Theorem~\ref{proof_of_conj} are, for $q\neq 2,3$, in some sense ``minimal''. In fact, the set of all the irreducible factors of $f(x)-f(0)$, where $f$ runs over all the elements of the $F$-set, is finite. 
It is therefore natural to ask whether, for every fixed $q$, one can construct an $F$-set in $\F_q[x]$ where the subset of irreducible divisors (of elements of the form $f(x)-f(0)$, for $f$ in the $F$-set) is infinite. This happens for the examples constructed in \cite{and}. The following definitions  formalize the notion of minimality for an $F$-set.
\begin{definition}
Let $A\sub I_q$ be an $F$-set. We define the \emph{nullity} of $A$  as
$$N(A)=\{f(x)\in A\colon f(x)\nmid g(x)-g(0), \,\,\forall \, g(x)\in A\}.$$
\end{definition}
It is easy to check that if $A$ is an $F$-set, then $A\setminus N(A)$ is again an $F$-set. Thus, given an $F$-set $A$, it is possible to define a sequence of $F$-sets as follows:
$$A_0\coloneqq A$$
$$A_n\coloneqq A_{n-1}\setminus N(A_{n-1}) \quad \forall \, n\geq 1.$$
This gives us a filtration on $A$:
$$A_0\supseteq A_1\supseteq \ldots\supseteq A_n\supseteq\ldots$$
which we will call \emph{nullity filtration}.	
\begin{definition}
The minimal $n\in\N$ such that $A_n$ is finite, if it exists, is called \emph{width} of $A$, and is denoted by $w(A)$. If such $n$ does not exist, we set $w(A)=\infty$.
\end{definition}
Notice that an $F$-set $A$ is finite if and only if $w(A)=0$. Therefore, Theorem~\ref{proof_of_conj} can be restated as follows: for every prime power $q$, there exists a non-trivial $F$-set of non-zero width. In particular, the $F$-sets constructed in the proof of the theorem have width $1$ when $q\neq 2,3$, and infinite width when $q=2,3$. It is clear that $F$-sets of width $1$ are in some sense the simplest possible infinite $F$-sets.
\begin{example}
The set $I_q$ of all monic irreducible polynomials in $\F_q[x]$ has infinite width. In fact, let $f(x)\in I_q$ and pick any $a\in \F_q^*$. By Dirichlet's theorem for $\F_q[x]$ (see for example \cite{kor}), there exists at least one (in fact there exist infinitely many) polynomial $g(x)$ such that $h(x)\coloneqq g(x)\cdot xf(x)+a$ is irreducible. Thus, $f(x)\mid h(x)-h(0)$ and this shows that $N(A)=\emptyset$. Therefore we have that $A_n=A$ for every $n$, which implies that $w(A)=\infty$.
\end{example}
The same argument used in the example above shows that if $A$ is an infinite $F$-set, then either $A=I_q$ or $I_q\setminus A$ is infinite. Indeed, suppose that $B\sub I_q$ is a finite set such that $A\sqcup B= I_q$ and let $f(x)\in B$. Fix $a\in\F_q^*$. Since there are infinitely many $g(x)\in\F_q[x]$ such that $g(x)\cdot xf(x)+a\in I_q$ is irreducible, it follows that there are infinitely many $g(x)$ such that $g(x)\cdot xf(x)+a\in A$. But since $A$ is an $F$-set and $f(x)\mid g(x)\cdot xf(x)$, it follows that $f(x)\in A$. Therefore, $B=\emptyset$.

The next proposition recollects some of the basic properties of the nullity and the width of an $F$-set. Notice that any union or intersection of $F$-sets is again an $F$-set.
\begin{proposition}\label{basic_prop}
		Let $A,B$ be $F$-sets, then we have:
		\begin{enumerate}
			\item $N(A)\cup N(B)\sub N(A\cup B)$;
			\item $N(A)\cap N(B)\supseteq N(A\cap B)$;
			\item if $A\sub B$, then $w(A)\leq w(B)$. If moreover $B\setminus A$ is finite, then $w(A)=w(B)$;
			\item if $w(A)$ and $w(B)$ are both finite, then $w(A\cup B)$ is finite;
			\item if $A$ is infinite and $N(A)$ is finite, then $w(A)=\infty$.
		\end{enumerate}
\end{proposition}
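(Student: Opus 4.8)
The plan is to verify the five items in turn; (1) and (2) are purely formal, whereas (3)--(5) require an analysis of how the nullity behaves along the filtration. Both (1) and (2) reduce to the single observation that if $D\subseteq C$ are $F$-sets then $N(C)\cap D\subseteq N(D)$, since the requirement ``$f\nmid g-g(0)$ for every $g$'' only becomes easier to meet as the range of $g$ shrinks; applying this with $C=A\cup B$ and $D\in\{A,B\}$, then with $C\in\{A,B\}$ and $D=A\cap B$, together with the trivial inclusions $N(A\cup B)\subseteq A\cup B$ and $N(A)\cap N(B)\subseteq A\cap B$, yields the stated containments.

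For the first half of (3) I would prove $A_n\subseteq B_n$ for all $n$ by induction: if $f\in A_{n-1}\setminus N(A_{n-1})$ then $f\in B_{n-1}$ and some $g\in A_{n-1}\subseteq B_{n-1}$ has $f\mid g-g(0)$, whence $f\notin N(B_{n-1})$ and $f\in B_n$; hence $B_n$ finite implies $A_n$ finite, i.e. $w(A)\le w(B)$. For the second half one also needs $w(B)\le w(A)$. Write $C=B\setminus A$ and let $\overline C$ be the smallest $F$-set containing $C$. The crux is that $\overline C$ is finite: since $g-g(0)$ always vanishes at $0$, it is divisible by $x$, so for irreducible $g$ of degree $\ge 2$ every irreducible factor of $g-g(0)$ has degree $<\deg g$; induction on degree shows $\overline{\{g\}}$ is finite, hence so is $\overline C=\bigcup_{g\in C}\overline{\{g\}}$. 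Then one shows $B_n\subseteq A_n\cup\overline C$ by induction on $n$ (the case $n=0$ being $B=A\cup C$): if $f\in B_n$ does not lie in $\overline C$, then $f\in A_{n-1}$, and since $f\notin N(B_{n-1})$ some $g\in B_{n-1}$ has $f\mid g-g(0)$; this $g$ cannot lie in the $F$-set $\overline C$ (else $f\in\overline C$), so $g\in A_{n-1}$ and $f\in A_n$. Taking $n=w(A)$ makes $A_n\cup\overline C$, hence $B_n$, finite.

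For (4) I would prove the sharper statement $(A\cup B)_n\subseteq A_n\cup B_n$ for all $n$, which gives $w(A\cup B)\le\max(w(A),w(B))$ (indeed equality, combining with the first half of (3)). In the induction, if $f\in(A\cup B)_n$ lies in, say, $A_{n-1}$ and $g\in(A\cup B)_{n-1}$ satisfies $f\mid g-g(0)$, then either $g\in A_{n-1}$, forcing $f\in A_n$, or $g\in B_{n-1}$, in which case the key point is that $f$ already lies in $B_{n-1}$: one has $f\in B_0$ since $B$ is an $F$-set, and a secondary induction yields $f\in B_{j+1}$ from $f\in B_j$ using the witness $g\in B_{n-1}\subseteq B_j$, so $f\in B_{n-1}$ and then $f\in B_n$. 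For (5) it suffices to show that $N(C)$ finite implies $N(C\setminus N(C))$ finite for every $F$-set $C$: if $f\in N(C\setminus N(C))$ then $f\notin N(C)$, so $f\mid g-g(0)$ for some $g\in C$, and this $g$ must in fact lie in $N(C)$; hence $f$ is one of the finitely many irreducible factors of the finitely many polynomials $g-g(0)$, $g\in N(C)$. Iterating from $A$, each $A_n$ is infinite (only finite sets are discarded) with $N(A_n)$ finite, so no $A_n$ is finite and $w(A)=\infty$.

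The step I expect to be the real obstacle is the second half of (3) together with (4): both hinge on controlling how the stripping operation propagates, via (i) the degree-descent fact that a finite set of irreducibles generates a finite $F$-set, and (ii) the observation that an irreducible $f$ persists in the nullity filtration of $B$ for at least as many steps as any $g$ with $f\mid g-g(0)$. Once these are in hand, (1), (2), and the remaining mechanics of (5) are routine.
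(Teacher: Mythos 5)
Your treatment of (1) and (2) does not establish the printed statements: the lemma ``if $D\sub C$ then $N(C)\cap D\sub N(D)$'' yields $N(A\cup B)\sub N(A)\cup N(B)$ and $N(A)\cap N(B)\sub N(A\cap B)$, which are the \emph{reverses} of the inclusions as stated in (1) and (2). In fact the printed inclusions are false: over $\F_2$ take $A=\{x,x+1\}$ and $B=\{x,x+1,x^2+x+1\}$. Then $x+1\in N(A)$ but $x+1$ divides $(x^2+x+1)-1=x(x+1)$, so $x+1\notin N(A\cup B)=N(B)=\{x^2+x+1\}$, contradicting (1); and since $A\cap B=A$, we get $x+1\in N(A\cap B)$ while $N(A)\cap N(B)=\emptyset$, contradicting (2). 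So the containments you actually derive are the correct ones and the statement's directions are in error, but you should say so explicitly instead of asserting that your lemma ``yields the stated containments''. (The paper gives no argument for (1)--(2) beyond ``immediate from the definition'' and never uses them; the only fact it invokes later is precisely your lemma in the form $N(B)\cap A\sub N(A)$.)

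Parts (3)--(5) are correct, and your routes differ from the paper's in instructive ways. For the second half of (3) the paper shows inductively that $B_n\setminus A_n$ stays finite, by observing that $N(A_n)\setminus N(B_n)$ consists of irreducible factors of $g-g(0)$ for the finitely many $g\in B_n\setminus A_n$; you instead bound all levels at once by the single finite $F$-set $\overline{C}$ generated by $B\setminus A$ (finite by degree descent on the factors of $g-g(0)$), proving $B_n\sub A_n\cup\overline{C}$, which is cleaner and makes the induction transparent. For (4) the paper argues by contradiction via a characterization of infinite width through arbitrarily long divisibility chains, whereas your inclusion $(A\cup B)_n\sub A_n\cup B_n$ is direct and gives the sharper statement $w(A\cup B)=\max(w(A),w(B))$; the one delicate point, that $f$ persists in the filtration of $B$ as long as its witness $g$ does, is handled correctly by your secondary induction. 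For (5) the paper runs a strictly decreasing degree argument to produce $j$ with $N(A_j)=\emptyset$ and then appeals to (3); your observation that $N(C)$ finite forces $N(C\setminus N(C))$ finite (because the witness for any $f\in N(C\setminus N(C))$ must lie in $N(C)$) already shows every $A_n$ is infinite, which suffices and is shorter. Apart from the direction issue in (1)--(2), the proposal is sound.
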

\begin{proof}
The claims (1) and (2) follow immediately from the definition of nullity. Let $\{A_n\}_{n\in \mathbb N}$ and $\{B_n\}_{n\in \mathbb N}$ be the nullity filtrations of $A$ and $B$ respectively. To prove (3), first note that $N(B)\cap A\sub N(A)$. Thus, if $f(x)\in A\setminus N(A)$, then $f(x)\notin N(B)$, since otherwise we would have $f(x)\in N(B)\cap A$. This shows that $A\setminus N(A)\sub B\setminus N(B)$. The same argument shows that $A_n\sub B_n$ for every $n\in \N$, and this implies that $w(A)\leq w(B)$. If $|B\setminus A|<\infty$, notice the following:
$$B\setminus N(B)=(A\cup (B\setminus A))\setminus N(B)=(A\setminus N(B))\cup ((B\setminus A)\setminus N(B)).$$
Now $(A\setminus N(B))\setminus (A\setminus N(A))=N(A)\setminus N(B)$, but if $f(x)\in N(A)\setminus N(B)$, then $f(x)\mid g(x)-g(0)$ for some $g\in B\setminus A$, and therefore $N(A)\setminus N(B)$ is finite. This shows that $A\setminus N(B)$, and hence $B\setminus N(B)$, differs from $A\setminus N(A)$ by a finite set. Applying the same argument with $A_n$ and $B_n$ in place of $A$ and $B$ shows that $B_n\setminus A_n$ is finite for all $n\in \N$ and the claim follows.

For point (4), notice first that if $C$ is an $F$-set, then $w(C)$ is infinite if and only if the following holds: for every $t\in \N$ there
exists $r\geq t$ and a set $\{f_1(x),\ldots,f_r(x)\}\sub C$ such that:
\[f_1(x)\neq x\quad \text{and} \quad f_i(x)\mid f_{i+1}(x)-f_{i+1}(0)\quad \forall i\in\{1,\ldots,r\}.\] 
In fact, assume first that $w(C)=\infty$ and let $\{C_n\}_{n\in \N}$ be the nullity filtration of $C$. If there exists $m\in \N$ such that $N(C_m)=\emptyset$, the claim is obvious since then there exists an infinite set $\{f_1(x),\ldots,f_n(x),\ldots\}\sub C$ with $f_i(x)\mid f_{i+1}(x)-f_{i+1}(0)$ for all $i$. Otherwise, fix $t\in \N$ and pick $f_1(x) \in N(C_t)$, so that $f_1(x)\neq x$. Since $f_1(x)\notin N(C_{t-1})$, there exists $f_2(x)\in C_{t-1}$ such that $f_1(x)\mid f_2(x)-f_2(0)$. Now $f_2(x)\notin N(C_{t-2})$, thus there exists $f_3(x)\in C_{t-2}$ such that $f_2(x)\mid f_3(x)-f_3(0)$, and so on until we get a set $\{f_1(x),\ldots,f_t(x)\}$ as required. 
Vice versa, note that if $w(C)<\infty$, then there exists $n\in \N$ such that $C_n=\{x\}$. Therefore no sequence $\{f_1(x),\ldots,f_r(x)\}$ with the property described above can have more than $n$ elements, as the smallest $F$-set containing the sequence is a subset of $C$ and it cannot have larger width. Assume now that $w(A),w(B)<\infty$. If it holds that $w(A\cup B)=\infty$, then for every $t\in \N$ there exists $r\in \N$ such that $r\geq t$ and a set $\{f_1(x),\ldots,f_r(x)\}\sub A\cup B$ as above. Now notice that if $f_r(x)\in A$ (resp. $B$) by definition of $F$-set we have that $f_i(x)\in A$ (resp. $B$) for every $i\leq r$. Since $t$ was arbitrary, this shows that $w(A)=\infty$ or $w(B)=\infty$, contradiction.

Finally, let us prove (5). For $n\geq 1$, let $f(x)\in N(A_n)$. By the definition of nullity, there exists $g(x)\in A_{n-1}$ such that $f(x)\mid g(x)-g(0)$ and $g(x)\in N(A_{n-1})$. This shows that $\deg f(x)$ is strictly smaller than $\deg g(x)$ for all $g(x)\in N(A_{n-1})$. Since $N(A)$ is finite, this argument proves inductively that $N(A_n)$ is finite for every $n$. Consider the sequence defined by 
\[\disp d_n\coloneqq \max_{f\in N(A_n)}\{\deg f(x)\}.\] We have showed that $\{d_n\}_{n\in\N}$ is strictly decreasing; hence there exists $j\in \N$ such that $N(A_j)=\emptyset$. Since $A_j$ differs from $A$ by a finite set, the claim follows by (3).
\end{proof}

	An $F$-set $A$ has width $\leq 1$ if and only if the set $A\setminus N(A)=\{f(x)\in A\colon f(x)\mid g(x)-g(0) \mbox{ for some } g(x)\in A\}$ is finite. It is therefore an interesting task to construct $F$-sets which have width greater than $1$.

\section{Preliminary results}\label{technology}
In this section we prove some ancillary results which will allow the construction of $F$-sets of width strictly greater than $1$.
However, we state them separately, as they might have other applications.
\begin{proposition}\label{irred_pol}
Let $p$ be a prime number. Let $K$ be a field containing a primitive $p$-th root of $1$. Let $f(x)\in K[x]$ be a monic, irreducible polynomial such that $f(0)$ is not a $p$-th power. If $p=2$, assume in addition that $-1$ is a square in $K$ or that $\deg f$ is even. Then for every $k\geq 0$, the polynomial $f(x^{p^k})$ is irreducible.
\end{proposition}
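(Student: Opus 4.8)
The plan is to argue by induction on $k$, the case $k=0$ being exactly the hypothesis that $f$ is irreducible. For the inductive step, write $f(x^{p^{k+1}})=g(x^p)$ where $g(y)\coloneqq f(y^{p^k})$; it then suffices to prove the following one-step statement: if $g\in K[x]$ is monic irreducible, $g(0)$ is not a $p$-th power in $K$, and, in case $p=2$, either $-1\in K^2$ or $\deg g$ is even, then $g(x^p)$ is irreducible. Since $g(0)=f(0)$ and, when $p=2$, $\deg g=p^k\deg f=2^k\deg f$ is even whenever $k\geq 1$, this extra condition is automatic for $k\geq 1$ and coincides with the assumption on $f$ for $k=0$; hence the one-step statement yields the Proposition.

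To prove the one-step statement, fix a root $\beta$ of $g$ in a fixed algebraic closure of $K$, put $L\coloneqq K(\beta)$, and let $m$ denote $[L:K]=\deg g$. The first key point is that $\beta\notin L^p$. Indeed, if $\beta=\gamma^p$ with $\gamma\in L$, then taking norms down to $K$ gives $N_{L/K}(\beta)=N_{L/K}(\gamma)^p\in K^p$, while $N_{L/K}(\beta)=(-1)^m g(0)$ because $g$ is monic. When $p$ is odd, $(-1)^m$ is itself a $p$-th power in $K$, so this forces $g(0)\in K^p$, a contradiction. When $p=2$ the same contradiction appears immediately if $m$ is even, and if $m$ is odd the hypothesis guarantees $-1\in K^2$, whence $-g(0)\in K^2$ forces $g(0)\in K^2$, again a contradiction.

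The second key point is the classical irreducibility criterion for radical extensions: since $p$ is prime, $x^p-\beta$ is irreducible over $L$ if and only if $\beta\notin L^p$. (Briefly: a proper monic factor of degree $d$ with $1\leq d<p$ has constant term equal, up to sign, to $\delta^d\omega$ with $\delta^p=\beta$ and $\omega^p=1$, so $\beta^d\in L^p$; since $\gcd(d,p)=1$, one gets $\beta\in L^p$.) Therefore, picking a $p$-th root $\delta$ of $\beta$, we have $[L(\delta):L]=p$, and since $L(\delta)=K(\delta)$ this gives $[K(\delta):K]=pm$. But $\delta$ is a root of $g(x^p)$, which is monic of degree $pm$; hence the minimal polynomial of $\delta$ over $K$ divides $g(x^p)$ and has the same degree, so it equals $g(x^p)$. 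Thus $g(x^p)$ is irreducible, which completes the induction.

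The only genuinely delicate point is the bookkeeping at $p=2$: one has to compute $N_{L/K}(\beta)=(-1)^m g(0)$, to recognize that the bad case, namely $m$ odd together with $-1\notin K^2$, is exactly what the hypothesis forbids, and to note that this case can occur only at the first step of the induction, because $\deg f(x^{p^k})$ is even for every $k\geq 1$. Everything else --- the norm computation, the radical-extension criterion, and the degree count in the tower $K\subseteq K(\beta)\subseteq K(\delta)$ --- is routine.
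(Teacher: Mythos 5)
Your proof is correct, but it takes a genuinely different route from the paper's. The paper argues directly on polynomial factorizations: given a nontrivial factorization $f(x^{p^k})=g(x)h(x)$ with $g$ irreducible, it exploits the symmetry $x\mapsto\xi x$ (for $\xi$ a primitive $p$-th root of unity) to show that either both factors are polynomials in $x^p$ --- which contradicts the inductive hypothesis after the substitution $y=x^p$ --- or $f(x^{p^k})=g(x)g(\xi x)\cdots g(\xi^{p-1}x)u(x)$ with $u$ constant, whence $f(0)$ is forced to be a $p$-th power; the side hypothesis at $p=2$ enters through a leading-coefficient sign analysis. You instead work field-theoretically: you reduce to the one-step statement, pass to $L=K(\beta)$ for a root $\beta$ of $g$, use the norm identity $N_{L/K}(\beta)=(-1)^{\deg g}\,g(0)$ to show $\beta\notin L^p$, invoke the classical criterion that $x^p-\beta$ is then irreducible over $L$, and conclude by counting degrees in the tower $K\subseteq K(\beta)\subseteq K(\delta)$. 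Your argument is shorter and makes completely transparent where the $p=2$ hypothesis comes from (the sign $(-1)^{\deg g}$ in the norm) and why it is only needed at the first step of the induction, since $\deg f(x^{p^k})$ is even for $k\geq1$; the price is that you lean on two standard but external facts (the norm as the determinant of multiplication by $\beta$, valid even without separability, and the irreducibility criterion for $x^p-a$), whereas the paper's argument is self-contained modulo elementary manipulations. Both proofs implicitly use $\operatorname{char}K\neq p$, which is forced by the existence of a primitive $p$-th root of unity, so there is no gap on that front either.
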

\begin{proof}

We prove the proposition by induction. For $k=0$, there is nothing to do.
Let the claim be true for $0,\ldots,k-1$ and consider $f(x^{p^k})$. 
The proof can be reduced to proving the following statement:
\begin{center}
\emph{if $f(x^{p^k})$ is reducible, then it can be written as $g(x^p)h(x^p)$ with $\deg g,\deg h>0$.} 
\end{center}
Indeed, notice that if the statement above is true, this concludes the proof as $f(x^{p^k})=g(x^p)h(x^p)$ and then setting $x^p=y$ we get $f(y^{p^{k-1}})=g(y)h(y)$, which is a contradiction by the induction hypothesis.

Let now $\xi\in K$ be a primitive $p$-th root of $1$. Suppose one has the factorization $f(x^{p^k})=g(x)h(x)$, with $g,h$ monic, $g$ irreducible and $\deg g,\deg h>0$. Note that $g(x)h(x)=g(\xi x)h(\xi x)$.

	We have to distinguish two cases:

	1) $g(x)$ is of the form $s(x^p)$ for some $s(x)\in K[x]$ of positive degree. Then $f(x^{p^k})=s(x^p)h(x)$, and therefore $h(x)=h(\xi x)$. This shows that $h(x)$ is of the form $t(x^p)$ for some $t(x)$ of positive degree. In this case, we are done.

2) $g(x)$ is not of the form $s(x^p)$. In this case, since $g(x)$ is irreducible, we have that $\gcd(g(\xi^i x),g(\xi^j x))=1$ for every $i,j\in\{0,\ldots, p-1\}$ such that $i\neq j$. In fact, if this was not the case, then we would have $g(x)=g(\xi^i x)$ for some $i\in\{1,\ldots,p-1\}$ and this would imply that $g(x)$ has the form $s(x^p)$ for some $s(x)\in K[x]$ of positive degree, which contradicts the fact that we are in case (2). Now let $i\in\{1,\ldots,p-1\}$. Since $g(x)h(x)=g(\xi^ix)h(\xi^i x)$, it follows that $g(\xi^i x)\mid h(x)$ as $g(\xi^i x)$ is coprime with $g(x)$. As this holds for any $i\in \{1,\dots,p-1\}$, we have $h(x)=g(\xi x)g(\xi^2 x)\ldots g(\xi^{p-1}x)u(x)$ for some $u(x)\in K[x]$, so that
$$f(x^{p^k})=g(x)g(\xi x)\ldots g(\xi^{p-1}x)u(x).$$
Notice that $u(x)=u(\xi x)$, so if $\deg u>0$ we are done again. Assume that this is not the case, i.e.\ let $u(x)=u$ be a constant. If $p>2$, the coefficient of the leading term of $g(x)g(\xi x)\ldots g(\xi^{p-1}x)$ is 
\[\xi^{\deg g\cdot\sum_{i=1}^{p-1}i}=\xi^{\deg g\left(\frac{p-1}{2}\right)p}=1,\] which implies that $u=1$ because $f(x)$ is monic. This yields a contradiction because the constant term of $g(x)g(\xi x)\ldots g(\xi^{p-1}x)$ is a $p$-th power and it coincides with $f(0)$.
If $p=2$, then $u\in\{1,-1\}$ because $f,g,h$ are all monic. If $-1$ is a square in $K$, then the constant term of $g(x)g(-x)u(x)$ is a square in any case, which is a contradiction. If the degree of $f$ is even, then $4\mid \deg f(x^{2^k})$ since $k\geq 1$ and thus $\deg g$ is even, implying that $u=1$ and that again the constant term of $g(x)g(-x)u$ is a square, which is again a contradiction. 
\end{proof}

\begin{remark}
In the case $K=\F_q$ it is easy to see that Proposition~\ref{irred_pol} can be deduced from Theorem~\ref{order}. On the other hand our proposition holds for any field $K$.
\end{remark}

\begin{lemma}\label{cyclotomic_polynomials}
Let $p>2$ be a prime number, $n\in \N_{>0}$ and $q=p^n$. Then we have the following.
\begin{enumerate}
\item Let $a\in \F_q$, let $k$ be a non-negative integer and let $f(x)=x^{2^k}-a\in \F_q[x]$. Then every irreducible factor of $f(x)$ either has degree $1$ or is of the form $x^{2^{t+1}}+bx^{2^t}+c$ for some $t\in \N$ and $b,c\in \F_q$.
\item Let $s,m\in \N$ and $2\nmid m$. Let $g(x)$ be an irreducible polynomial of order $2^s\cdot m$. Then $g(x)$ divides a polynomial of the form $x^{2^k}-a$, for some $k\in \N$ and $a\in \F_q$, if and only if $m\mid q-1$.
\end{enumerate}
\end{lemma}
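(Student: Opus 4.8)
The plan is to base both parts on Theorem~\ref{root} (the order of an irreducible polynomial equals the multiplicative order of any of its roots), together with the standard fact that the degree of an irreducible polynomial equals $[\F_q(\alpha):\F_q]$ for any root $\alpha$. Part (2) is then quick. Write $\ord(g)=2^sm$ with $m$ odd and fix a root $\alpha$, so $\ord(\alpha)=2^sm$. If $m\mid q-1$, then $\ord(\alpha^{2^s})=m$ divides $q-1$, so $\alpha^{2^s}\in\F_q^*$; with $a\coloneqq\alpha^{2^s}$, the polynomial $x^{2^s}-a\in\F_q[x]$ vanishes at $\alpha$, hence $g\mid x^{2^s}-a$. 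Conversely, if $g\mid x^{2^k}-a$ with $a\in\F_q$, then $a\neq 0$ (as $g\neq x$) and $a=\alpha^{2^k}$, so $\ord(a)=2^sm/\gcd(2^sm,2^k)$ has odd part $m$; since $a\in\F_q^*$ gives $\ord(a)\mid q-1$, we conclude $m\mid q-1$.

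For part (1) we may assume $a\neq 0$ (if $a=0$ every irreducible factor of $x^{2^k}$ equals $x$). Let $g$ be a monic irreducible factor with $\deg g>1$ and let $\alpha$ be a root, so $\alpha^{2^k}=a\in\F_q^*$. In the tower $\F_q(\alpha)\supseteq\F_q(\alpha^2)\supseteq\cdots\supseteq\F_q(\alpha^{2^k})=\F_q$ every step has degree at most $2$, so $\deg g=[\F_q(\alpha):\F_q]=2^r$ for some $r\geq 1$. The heart of the argument is the claim that $[\F_q(\alpha^{2^{r-1}}):\F_q]=2$; granting it, the minimal polynomial $g_0=y^2+by+c$ of $\alpha^{2^{r-1}}$ over $\F_q$ is a monic quadratic, $g_0(x^{2^{r-1}})$ is monic of degree $2^r$ and vanishes at $\alpha$, so $g$ divides it and, comparing degrees, $g(x)=g_0(x^{2^{r-1}})=x^{2^r}+bx^{2^{r-1}}+c$, which is of the asserted form $x^{2^{t+1}}+bx^{2^t}+c$ with $t=r-1$.

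To prove the claim I would use the Frobenius $\phi\colon z\mapsto z^q$ of $\F_{q^{2^r}}$, which has order $2^r$. Since $a=\alpha^{2^k}$ is $\phi$-fixed, $\zeta\coloneqq\phi(\alpha)/\alpha$ satisfies $\zeta^{2^k}=1$, so $\zeta$ has $2$-power order $2^\ell$, and $\ell\geq 1$ because $\alpha\notin\F_q$; an easy induction gives $\phi^i(\alpha)=\zeta^{(q^i-1)/(q-1)}\alpha$ for all $i\geq 0$. Plugging in $\phi^{2^r}(\alpha)=\alpha$ and $\phi^{2^{r-1}}(\alpha)\neq\alpha$ (the latter since $\F_q(\alpha)=\F_{q^{2^r}}$) and invoking the lifting-the-exponent identity $v_2\!\big((q^{2^j}-1)/(q-1)\big)=v_2(q+1)+j-1$ for $j\geq 1$, one obtains $r\leq\ell\leq v_2(q+1)+r-1$. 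Hence $\eta\coloneqq\zeta^{2^{r-1}}$ has order $2^{\ell-(r-1)}$ with $1\leq\ell-(r-1)\leq v_2(q+1)$, so $\eta\neq 1$ and $\eta^{q+1}=1$. Then $\phi(\alpha^{2^{r-1}})=\eta\,\alpha^{2^{r-1}}\neq\alpha^{2^{r-1}}$ while $\phi^2(\alpha^{2^{r-1}})=\eta^{q+1}\alpha^{2^{r-1}}=\alpha^{2^{r-1}}$, i.e.\ $\alpha^{2^{r-1}}\in\F_{q^2}\setminus\F_q$, which is exactly the claim. I expect this last claim — controlling $v_2$ of $(q^{2^j}-1)/(q-1)$ tightly enough to force $\alpha^{2^{r-1}}$ into precisely the quadratic extension of $\F_q$ — to be the only genuine obstacle; the remaining steps are routine manipulations of orders of elements in finite cyclic groups.
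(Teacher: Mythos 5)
Your proof is correct. For part (2) you take essentially the same route as the paper: both arguments rest on Theorem~\ref{root}, the forward direction extracting the odd part of the order of $a=\alpha^{2^k}\in\F_q^*$, and the converse using the uniqueness of the subgroup of order $m$ in the cyclic group $\F_{q^{\deg g}}^*$. For part (1), however, your argument is genuinely different. The paper first factors $x^{2^k}-a$ over $\F_{q^2}$, proving by induction (using that $-1$ is always a square in $\F_{q^2}$, together with Proposition~\ref{irred_pol}) that every irreducible factor there is a binomial $x^{2^i}+w$, and then descends to $\F_q$ by pairing each such factor with its Frobenius conjugate, obtaining $x^{2^{i+1}}+(w+w^q)x^{2^i}+ww^q$. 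You instead work directly with a root $\alpha$ of an irreducible factor $g$ of degree $>1$: the tower $\F_q(\alpha)\supseteq\F_q(\alpha^2)\supseteq\cdots\supseteq\F_q$ forces $\deg g=2^r$, and your $2$-adic computation with $\zeta=\alpha^{q-1}$ and the identity $v_2\bigl((q^{2^j}-1)/(q-1)\bigr)=v_2(q+1)+j-1$ pins down $[\F_q(\alpha^{2^{r-1}}):\F_q]=2$, whence $g$ is the quadratic minimal polynomial of $\alpha^{2^{r-1}}$ evaluated at $x^{2^{r-1}}$. I checked the valuation bookkeeping: the bounds $r\le\ell\le v_2(q+1)+r-1$ do follow from $\phi^{2^{r-1}}(\alpha)\neq\alpha=\phi^{2^r}(\alpha)$ (using $v_2(q+1)\ge 1$ for the lower bound), so $\eta=\zeta^{2^{r-1}}$ is a nontrivial root of unity of order dividing $q+1$, which is exactly what places $\alpha^{2^{r-1}}$ in $\F_{q^2}\setminus\F_q$. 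The paper's route buys a reuse of Proposition~\ref{irred_pol} and an explicit description of the factorization over $\F_{q^2}$; yours is self-contained, avoids Proposition~\ref{irred_pol} entirely, and isolates the one genuinely field-theoretic point as a clean Kummer-style statement about where $\alpha^{2^{r-1}}$ lives.
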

\begin{proof}
Let us prove (1). If $a=0$ the claim is obvious, therefore suppose $a\in\F_q^*$.

We first show that for any fixed $u\in \F_{q^2}^*$ and non-negative integer $k$, every irreducible factor of 
$g_k(x):=x^{2^k}-u\in \F_{q^2}[x]$ is of the form $x^{2^i}+w$, for some $i\in\N$ and $w\in \F_{q^2}$. Once again, we proceed by induction.
If $k=0$, the claim is trivially true, therefore let us assume it for $k$ and consider $g_{k+1}(x)=x^{2^{k+1}}-u$. 
If $u=w^2$ for some $w\in\F_{q^2}$, then $g_{k+1}(x)=(x^{2^k}+w)(x^{2^k}-w)$, and by the induction hypothesis we are done. On the other hand, if $u$ is not a square in $\F_{q^2}$, then also $-u$ is not a square (as $-1$ is always a square in $\F_{q^2}$) and therefore the polynomial $x^2-u$ is irreducible in $\F_{q^2}[x]$. Thus, the claim follows by Proposition~\ref{irred_pol}.

Now consider $f(x)$ as a polynomial in $\F_{q^2}[x]$. We denote by $\phi_q\colon \F_{q^2}[x]\to\F_{q^2}[x]$ the Frobenius morphism defined by $\sum a_ix^i\mapsto \sum a_i^qx^i$. Let $g(x)$ be an irreducible factor of $f(x)$ in $\F_{q^2}[x]$. Then $f(x)=g(x)h(x)$ for some $h(x)\in \F_{q^2}[x]$ and therefore $f(x)=\phi_q(f(x))=\phi_q(g(x))\phi_q(h(x))$. This shows that $\phi_q(g(x))$ is also a factor of $f(x)$. By what we proved earlier, $g(x)=x^{2^i}+u$ for some $u\in \F_{q^2}$. If $\phi_q(g(x))=g(x)$, this means that $u\in \F_q$, and therefore $g(x)\in \F_q[x]$ is an irreducible factor of $f(x)$ over $\F_q[x]$, and we are done. If $\phi_q(g(x))\neq g(x)$, since both polynomials are monic and $g(x)$ is irreducible over $\F_{q^2}[x]$, it follows that also $\phi_q(g(x))$ is irreducible over $\F_{q^2}[x]$. This shows that $g(x)\phi_q(g(x))$ is an irreducible factor of $f(x)$ over $\F_q[x]$. It is immediate to see that $g(x) \phi_q(g(x))$ has the required form: 
\[g(x)\phi_q(g(x))=x^{2^{i+1}}+(u+u^q)x^{2^i}+uu^q.\]

Now let us prove (2). First recall that, by Theorem~\ref{root}, if $\deg g=t$ and $\alpha$ is a root of $g$, the order of $g$ equals the order of $\alpha$ in the multiplicative group $\F_{q^{2^t}}^*$. Suppose first that $g(x)\mid x^{2^k}-a$, for some $k\in \N$ and $a\in \F_q$. Let $\alpha$ be a root of $g(x)$. Then $\alpha^{2^k}=a$ and therefore there exists $r\in \N$ with $r\mid q-1$ such that $\alpha^{2^kr}=1$, and the claim follows.
Conversely, suppose that $\alpha^{2^sm}=1$. Since $m\mid q-1$ and $\F_{q^{2^t}}^*$ is cyclic, it follows that $\alpha^{2^s}=a$ for some $a \in \F_q$, as there is only one subgroup of order $m$ of $\F_{q^{2^t}}^*$, and it is entirely contained in $\F_q$. It follows that $g(x)\mid x^{2^s}-a$.
\end{proof}

\section{Constructing \texorpdfstring{$F$}{}-sets of width \texorpdfstring{$2$}{} and \texorpdfstring{$\infty$}{}}\label{main_section}
Using the results of the previous section, we now prove a stronger version of Theorem~\ref{proof_of_conj}. In particular, we show that we can always construct an infinite, non-trivial $F$-set $A$ for which the set of prime divisors of all the elements of the form $f(x)-f(0)$ (for $f\in A $) is again infinite.
\begin{theorem}\label{main_theorem}
	Let $p$ be a prime number, $n$ a non negative integer and $q=p^n$. Then:
		\begin{enumerate}
			\item[a)] if $q\neq 2,3$, there exists an $F$-set of width $2$;
			\item[b)] there exists an $F$-set of infinite width in one of the following cases:
				\begin{enumerate}[i)]
					\item $p\equiv 2,5\bmod 9$ and $n=1$;
					\item $p\equiv 5\bmod 8$ and $n$ is odd;
					\item $q\equiv 3\bmod 4$.
			\end{enumerate}
	\end{enumerate}
\end{theorem}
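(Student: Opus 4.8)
The plan is to build on the strategy of Theorem~\ref{proof_of_conj} but now choose the base polynomial so that the nullity filtration takes exactly the required number of steps. For part (a), the key observation is that an $F$-set has width $\leq 1$ iff $A\setminus N(A)$ is finite; so to get width exactly $2$ we need an $A$ whose nullity $N(A)$ is infinite (forcing $w(A)\geq 2$) but such that after one step $A_1=A\setminus N(A)$ is an infinite $F$-set of width $1$, i.e. $A_1\setminus N(A_1)$ is finite. The construction I would use: fix a prime $\ell\mid q-1$ (odd if $q\equiv 3\bmod 4$, so that Theorem~\ref{order} or Proposition~\ref{irred_pol} applies) and a generator $\alpha$ of $\F_q^*$. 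Consider the ``base layer'' consisting of all polynomials $f_{a,k}(x)=(x^{\ell^k}-a)$ for suitable $a$, together with a ``top layer'' of polynomials $g$ whose shifts $g(x)-g(0)$ are divisible by infinitely many of the $f_{a,k}$. Concretely one wants a single infinite family $\{g_j\}$ with $g_j(x)-g_j(0)$ factoring (up to units) as a product of polynomials of the form $x^{\ell^k}-a$; then the $x^{\ell^k}-a$ occurring land in $N(A)$, the $g_j$'s survive to $A_1$, and $A_1$ should itself be a width-$1$ set of the type produced in Theorem~\ref{proof_of_conj}. Proposition~\ref{irred_pol} is exactly what guarantees irreducibility of the $x^{\ell^k}-a$ layer and, crucially, one must check the irreducibility of the $g_j$ themselves — this is where Theorem~\ref{order} (and, for $\ell=2$, Lemma~\ref{cyclotomic_polynomials} controlling which orders can divide an $x^{2^k}-a$) will be used.

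For part (b), the goal is an $F$-set whose nullity filtration never terminates. In the three listed cases one has enough structure to iterate a degree-$2$ composition construction in the spirit of the $q=3$ example in Theorem~\ref{proof_of_conj}: pick $f(x)=(x-\gamma)^2+\gamma+m$ with $-f(\gamma),f_2(\gamma),f_3(\gamma),\ldots$ never a square (so Theorem~\ref{thm_composition} gives irreducibility of every iterate $f_k$), arranged so that the shifts $f_k(x)-f_k(0)$ split off, besides earlier iterates, genuinely new linear or quadratic factors at every stage. The congruence conditions are precisely what make such an $f$ exist: case (i) $p\equiv 2,5\bmod 9$ corresponds to the constructions already in \cite{and}; cases (ii) $p\equiv 5\bmod 8$ with $n$ odd, and (iii) $q\equiv 3\bmod 4$, are the quadratic-non-residue conditions that keep $2$ (or the relevant constant) a non-square, so that the orbit avoids squares forever. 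Then in the resulting $F$-set $A$ one checks that for every $t$ there is a chain $f_1\mid f_2-f_2(0)\mid\cdots$ of length $\geq t$ with $f_1\neq x$; by the criterion in the proof of Proposition~\ref{basic_prop}(4), this forces $w(A)=\infty$. Non-triviality in all cases follows as before: only finitely many elements of $A$ have odd degree, or more simply $I_q\setminus A$ is infinite by the argument after the example in Section~\ref{width}.

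The main obstacle I expect is part (a): producing a set that is genuinely of width $2$ and not accidentally of width $1$ or $\infty$. Getting width $\geq 2$ is cheap (make $N(A)$ infinite), and getting width $\leq 2$ amounts to showing $A_1$ has width $1$, i.e. that $A_1\setminus N(A_1)$ is finite — equivalently, that the ``top layer'' polynomials $g_j$ are \emph{not} themselves divisors of $g(x)-g(0)$ for any $g\in A$ except finitely often. This requires a clean description of $A_1$ and a degree/order argument showing no third layer appears, which is the delicate bookkeeping step. The technical inputs are all in place — Proposition~\ref{irred_pol} for the $x^{\ell^k}-a$ irreducibility, Lemma~\ref{cyclotomic_polynomials} to pin down exactly which irreducibles divide some $x^{2^k}-a$ (needed when $\ell=2$), and Theorem~\ref{order}/Theorem~\ref{thm_composition} for the irreducibility of the generating polynomials — so the work is in assembling them and verifying the width count at each level of the nullity filtration.
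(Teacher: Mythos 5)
Your outline for part (a) matches the paper's strategy in spirit (a two--layer set: a base layer $\{x^{\ell^k}+\alpha\}_{k}$ and a top layer $\{f(x^{\ell^k})\}_{k}$ with $f(x)=x^2+\alpha x+\beta$, so that $f(x^{\ell^k})-\beta=x^{\ell^k}(x^{\ell^k}+\alpha)$ peels off exactly one level of the nullity filtration), but as written it is a plan rather than a proof, and the step you yourself flag as ``delicate bookkeeping'' is precisely where the content lies. Two concrete gaps. First, you never exhibit the top layer nor verify that its elements lie in $N(A)$ while the base layer does not; in the paper this becomes immediate once one takes $\alpha,\beta$ both non-$\ell$-th powers, which is exactly the hypothesis Proposition~\ref{irred_pol} needs to make \emph{both} layers irreducible. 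Second, the existence of such a pair $(\alpha,\beta)$ with $x^2+\alpha x+\beta$ irreducible is not free: for odd $q$ it is a counting argument comparing non-$\ell$-th powers with non-squares, and for $q=2^n$ with $n\geq 3$ one must first know that $2^n-1$ has a prime factor $\ell\geq 5$ (the paper invokes Mih\u{a}ilescu's theorem for this) and then run a separate count of irreducible quadratics. Your proposal, which fixes ``a prime $\ell\mid q-1$'' and ``a generator $\alpha$'', does not engage with the characteristic-$2$ case at all.

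The more serious gap is in part (b), case (iii). You propose to handle $q\equiv 3\bmod 4$ by the iterated-quadratic construction of Theorem~\ref{thm_composition}, asserting that the congruence ``keeps $2$ (or the relevant constant) a non-square''. That fails already for $q=7\equiv 3\bmod 4$, where $2=3^2$; more importantly, you give no argument that for every such $q$ there is a quadratic whose entire critical orbit avoids squares \emph{and} whose iterates satisfy a factorization identity forcing $f_j\mid f_k(x)-f_k(0)$ --- the $q=3$ example works only because of the special identity $f_k(x)+2=f_{k-1}(x)^2$ for $f(x)=x^2-2$. The paper takes a genuinely different route here: it sets $A=\{f\in I_q\colon f\mid x^{2^k}-a\ \text{for some}\ k,a\}$, uses Lemma~\ref{cyclotomic_polynomials}(1) to see that $A$ is an infinite, non-trivial $F$-set, and proves $N(A)=\emptyset$ by producing, for each $f\in A$ dividing some $x^{2^s}+d$, an irreducible $h(x)=x^2+dx+e$ with $e$ a non-square and order $2^l n$ with $n\mid q-1$, so that $h(x^{2^s})$ again lies in $A$ and is divisible by $f$ after subtracting its constant term. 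Cases (i) and (ii) of your plan do match the paper (the citation of \cite{and}, and the $x^2-2$ construction when $2$ is a non-square, respectively), but case (iii) as you describe it would not go through.
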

	\begin{proof}
		a) Let us choose a prime $l$ in the following way.
		$$l=\begin{cases}2 & \mbox{if } q\equiv 1\bmod 4\\
						 \mbox{any odd prime dividing } q-1 & \mbox{if } q\equiv 3\bmod 4\\
						3 & \mbox{if } q=4\\
						\mbox{any prime $\geq 5$ dividing } q-1 & \mbox{if } q=2^n, \,\, n\geq 3.
		\end{cases}$$
		Note that a prime as in the fourth case always exists in virtue of Catalan's Conjecture (now Mih\u{a}ilescu's theorem, see \cite{mih}), which states that the only integer solution of the equation $x^a-y^b=1$, with $x,y>0$ and $a,b>1$, is $x=3,y=2,a=2,b=3$.

		We claim that there exist $\alpha,\beta\in \F_q$ such that:
		\begin{itemize}
			\item both $\alpha,\beta$ are not $l$-powers;
			\item the polynomial $x^2+\alpha x+\beta$ is irreducible.
		\end{itemize}
We will show that this is possible for any choice of $l$ as above.
		
		Fix any $\alpha\in \F_q^*$ and consider the bijection
		$$\varphi_{\alpha}\colon\F_q\to\F_q$$
		$$y\mapsto \alpha^2-4y.$$
		
When $l=2$ and $p>2$, notice that $\varphi_{\alpha}(0)$ is a square. On the other hand, if $\gamma$ is not a square, $\varphi_{\alpha}(\gamma)\neq 0$. Since the set of non-zero squares and that of non-squares have the same cardinality, there must be some non-square $\beta$ such that $\varphi_{\alpha}(\beta)$ is not a square.

If $l>2$ and $p>2$, the subset of the elements of $\F_{q}^*$ which are not $l$-powers has cardinality $\disp\frac{l-1}{l}(q-1)$, which is strictly larger than the number of squares in $\F_q^*$. Thus, there exists a non-$l$-power $\beta$ such that $\varphi_{\alpha}(\beta)$ is not a square.
This shows that, chosen any non-$l$-power $\alpha$, there exists a non-$l$-power $\beta$ such that $\alpha^2-4\beta$ is not a square, and therefore the polynomial $x^2+\alpha x+\beta$ is irreducible. 

If $l=3$, $p=2$ and $n=2$, let $\F_{4}=\F_2(\alpha)$, where $\alpha$ is a root of $x^2+x+1$. Then one checks that $\alpha$ is not a cube and $x^2+\alpha x+\alpha$ is irreducible.

Finally, let $p=2$, $n\geq 3$ and $l\geq 5$. The number of monic, irreducible polynomials of degree $2$ in $\F_q[x]$ is $\disp\frac{q^2-q}{2}$. The number of polynomials of the form $x^2+\alpha x+\beta$ where both $\alpha,\beta$ are not $l$-powers is 
\[ \left(q-1-\frac{q-1}{l}\right)^2.\] 
Thus our claim is proved whenever 
\[\frac{q^2-q}{2}+\left(q-1-\frac{q-1}{l}\right)^2>q^2-1,\] 
since $q^2-1$ is the number of all polynomials of the forms $x^2+\alpha x+\beta$, with $(\alpha,\beta)\neq (0,0)$. This inequality is equivalent to
		$$S(q,l)\coloneqq\left(\frac{1}{2}l^2-2l+1\right)q^2+\left(-\frac{5}{2}l^2+4l-2\right)q+(l-1)^2+l^2\geq 0.$$
Let 
\[ A(l)\coloneqq\frac{1}{2}l^2-2l+1, \quad B(l)\coloneqq-\frac{5}{2}l^2+4l-2 \quad \text{and}\quad C(l)\coloneqq(l-1)^2+l^2.\] 
As $l\geq 5$, we have that $A(l)>0$ and 
\[\frac{-B(l)+\sqrt{B(l)^2-4A(l)C(l)}}{2A(l)}<12\] 
which shows that $S(q,l)>0$ whenever $n\geq 4$ and $l\geq 5$. One checks that $S(8,7)=49>0$, and the claim is complete.

The main ingredient of the construction is now ready, as we can always produce an irreducible monic polynomial $f(x)=x^2+\alpha x+\beta$ where $\alpha$ and $\beta$ are not $l$-powers.

Let $A\coloneqq\{x\}\cup\{x^{l^k}+\alpha\}_{k\in \N}\cup\{f(x^{l^k})\}_{k\in \N}$. By Proposition~\ref{irred_pol}, the polynomials $x^{l^k}+\alpha$ and $f(x^{2^k})$ are irreducible for every $k\geq 0$. Thus $A$ is an infinite, nontrivial $F$-set. Note that $N(A)=\{f(x^{l^k})\}_{k\in\N}$ by construction. Thus, $A_1=A\setminus N(A)=\{x\}\cup\{x^{l^k}+\alpha\}_{k\in\N}$ and $A_2=\{x\}$, implying that $w(A)=2$.

		b) When $p\equiv 2,5\bmod 9$, an $F$-set of infinite width is constructed in~\cite[Theorem~1.1]{and}.

		When $p\equiv 5\bmod 8$ and $n$ is odd, $2$ is not a square in $\F_q$ and therefore the $F$-set $A$ constructed in the proof of Theorem~\ref{proof_of_conj} has infinite width, since $N(A)=\emptyset$.

Let now $q\equiv 3 \bmod 4$ and $A\coloneqq \{f\in I_q\colon f\mid x^{2^k}-a\mbox{ for some }k\in \N \mbox{ and }a\in \F_q\}$. By (1) of Lemma~\ref{cyclotomic_polynomials}, this is an infinite, non-trivial $F$-set. Let us prove that $N(A)=\emptyset$, so that $w(A)=\infty$. This amounts to show that for every $f(x)\in A$, there exist $d,e\in \F_q$ and $s\in \N$ such that:
		\begin{itemize}
			\item $f(x)\mid x^{2^{s}}+d$.
			\item $x^{2^{s+1}}+dx^{2^{s}}+e\in A$;			
		\end{itemize}
By construction, $f(x)$ divides a polynomial of the form $x^{2^s}+d$ for some $d\in \F_q$. Hence it is enough to find $e\in \F_q$ such that 
$x^{2^{s+1}}+dx^{2^{s}}+e$ is in $A$.
In order to do so, we first prove a weaker statement and then show that the general fact easily follows by Proposition \ref{irred_pol}.
\begin{center}
\textbf{Claim:} \emph{there exists $e\in \F_q\setminus \F_q^2$ such that $h(x)=x^2+dx+e$ is irreducible and has order $2^l\cdot n$ with $2 \nmid n$ and $n\mid q-1$.}
\end{center}
\begin{proof}[Proof of the claim.]\renewcommand{\qedsymbol}{}
Let $r$ be the largest positive integer such that $2^r\mid q^2-1$. Notice that since $q\equiv 3\bmod 4$, we have that $r\geq 3$. Let $\alpha\in \F_{q^2}^*$ be any element of order $2^r$. Clearly $\alpha$ is not a square as otherwise $2^{r+1}$ would divide $q^2-1$. 
In addition, $\tr(\alpha)$, namely the trace of $\alpha$, is non-zero, since otherwise the minimal polynomial of $\alpha$ would be of the form $x^2+u$, for some $u\in \F_q$. 
This would imply that $\alpha^2\in \F_q$ and this would imply in turn the existence of an element of $\F_q$ of order $2^{r-1}$ with $r-1\geq 2$, which is in contradiction with the assumption $q\equiv 3\bmod 4$. 
On the other hand, since $\alpha$ is not a square in $\F_{q^2}$, its norm $N(\alpha)$ is not a square in $\F_q$ (this is a standard fact for finite fields). 
Let $u\coloneqq\tr(\alpha)$ and consider the element $\displaystyle\beta\coloneqq\frac{d}{u}\alpha$. Then $\tr(\beta)=d$ and $e\coloneqq N(\beta)=\frac{d^2}{u^2}N(\alpha)$ is again not a square in $\F_q$.
Finally, the order of $\beta$ is $2^l\cdot n$ for some $l\in \N$ and $n\mid q-1$ by construction.
This concludes the proof of the claim as $x^2+dx+e$ is the minimal polynomial of $\beta$. 
\end{proof}
Now we are ready to complete the proof. Consider $h(x)=x^{2}+dx+e$ as in the claim: as $e$ is not a square and the degree of $h(x)$ is even, we can apply Proposition~\ref{irred_pol}, getting that $h(x^{2^s})=x^{2^{s+1}}+dx^{2^s}+e$ is irreducible. 
One  observes also that the order of $h(x)$ is $2^{l+s}n$ and $n|q-1$. By Lemma~\ref{cyclotomic_polynomials} it follows that $h(x)$ divides $x^{2^{l+s}}-a$, as required.

%

Notice that if $q=3$, we have two different examples of $F$-sets of infinite width: the one just constructed above and the one described in the proof of Theorem~\ref{proof_of_conj}.
\end{proof}
	It is natural to formulate the following generalization of Conjecture~\ref{congettura}.
	\begin{conjecture}\label{conjwithwidth}
		For every prime power $q$, there exist non-trivial $F$-sets in $\F_q[x]$ of arbitrary width.
\end{conjecture}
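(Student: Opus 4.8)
The plan is to produce, for each prime power $q$ and each $w\in\N$, an $F$-set of width \emph{exactly} $w$ by a two-parameter construction that separates the ``horizontal'' infinitude of the set from its ``vertical'' complexity. Theorem~\ref{main_theorem}(a) already realises $w=2$: a prime $l$ together with the substitution $x\mapsto x^{l^k}$ generates an infinite family at each level (the horizontal direction, indexed by $k\in\N$), while the two levels $\{x^{l^k}+\alpha\}$ and $\{f(x^{l^k})\}$ account for the width. I would generalise this to a tower of $w$ levels, keeping the index $k$ to provide infinitude and the level count to provide the width.

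\textbf{The seed tower.} Fix $l$ as in Theorem~\ref{main_theorem}(a) and set $P_0:=x$. I would look for monic irreducible seeds $P_1,\dots,P_w\in\F_q[x]$ of strictly increasing degree such that, for each $j$, the constant term $P_j(0)$ is not an $l$-th power (and, when $l=2$, such that $\deg P_j$ is even or $-1\in(\F_q^*)^2$), so that Proposition~\ref{irred_pol} makes $P_j(x^{l^k})$ irreducible for every $k$; and such that subtracting the constant term produces only $x$ and the seed one level down,
\[
P_j(x)-P_j(0)=x^{a_j}\,P_{j-1}(x),
\]
with the base level $P_1=x+\alpha$ for a non-$l$-power $\alpha$. (If irreducibility forces it, I would relax the right-hand side to $x^{a_j}\prod_{i<j}P_i(x)^{e_i}$ with $e_{j-1}\ge 1$, which still keeps every factor inside the set.) Given such a tower, define
\[
A:=\{x\}\cup\bigcup_{j=1}^{w}\{\,P_j(x^{l^k}):k\in\N\,\}.
\]

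\textbf{Verification.} Substituting $y=x^{l^k}$ into the factorisation relation gives $P_j(x^{l^k})-P_j(0)=x^{a_j l^k}P_{j-1}(x^{l^k})$, whose irreducible factors are $x$ and $P_{j-1}(x^{l^k})$, both in $A$; hence $A$ is an infinite, non-trivial $F$-set. For the width I would check that the nullity filtration removes exactly one level per step: the top family $\{P_w(x^{l^k})\}$ lies in $N(A)$ because no element's reduction is divisible by an irreducible of degree $\deg P_w$, whereas each lower family survives since it divides the reduction of the family immediately above it. A degree comparison rules out cross-level divisibilities, so $A_j$ is $\{x\}$ together with levels $1,\dots,w-j$; thus $A_{w-1}$ is infinite, $A_w=\{x\}$, and $w(A)=w$.

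\textbf{Main obstacles.} The genuine difficulty is the \emph{existence of the seed tower}. Requiring $P_j=x^{a_j}\prod_{i<j}P_i^{e_i}+c$ to be irreducible confines us to a sparse family of polynomials, and proving that an irreducible member with non-$l$-power constant term always exists appears to need a Weil-type count of irreducibles of the prescribed degree with a prescribed constant term — a substantial generalisation of the explicit inequality $S(q,l)\ge 0$ used for $w=2$. I expect this to succeed for all sufficiently large $q$, with the remaining small fields treated by hand. By far the hardest cases are $q=2$ and $q=3$: for these $q-1$ admits no prime $l$ meeting the hypotheses that drive Proposition~\ref{irred_pol} across the whole tower (for $q=2$ no prime divides $q-1=1$; for $q=3$ the only choice $l=2$ already fails at the linear level, since $x^{2^k}+\alpha$ need not be irreducible when $-1$ is a non-square), which is precisely why Theorem~\ref{main_theorem}(a) excludes them. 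For $q=2,3$ I would instead attempt a composition-theoretic construction in the spirit of the $q=3$ argument of Theorem~\ref{proof_of_conj}, tuning a quadratic via Theorem~\ref{thm_composition} so that its iterates are irreducible up to a prescribed depth while stability fails beyond it; engineering such finite-depth behaviour while keeping the $F$-set infinite is the crux and, to my knowledge, the main reason the conjecture is still open. Finally, infinite width for the prime powers not covered by Theorem~\ref{main_theorem}(b) would be folded in by pushing the cyclotomic analysis of Lemma~\ref{cyclotomic_polynomials} beyond the prime $2$.
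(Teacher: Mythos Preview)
The statement you are attempting is \emph{Conjecture~\ref{conjwithwidth}}, and the paper does not prove it: it is formulated there as an open problem immediately after Theorem~\ref{main_theorem}, with no accompanying argument. So there is no ``paper's own proof'' to compare your proposal against.

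Your write-up is, accordingly, not a proof either, and you say so yourself: you outline a plausible generalisation of the width-$2$ construction of Theorem~\ref{main_theorem}(a) to a tower of $w$ levels, and then explicitly list the obstructions (existence of the seed tower, the small fields $q=2,3$, and the missing infinite-width cases). This is an honest strategy sketch, not a proof, and the gaps you identify are real. In particular, the step ``choose $P_j = x^{a_j}\prod_{i<j}P_i^{e_i}+c$ irreducible with $c$ a non-$l$-th power'' is exactly where the argument would need new input: for $w=2$ the paper gets by with an explicit inequality, but for general $w$ one would need a uniform irreducibility or counting statement for polynomials of this very constrained shape, and none is provided. The $q\in\{2,3\}$ cases are, as you note, outside the reach of Proposition~\ref{irred_pol} for the reasons you give, and the paper offers no replacement mechanism beyond width $\infty$ constructions in special congruence classes.

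In short: there is a genuine gap, namely that the conjecture is open and your proposal does not close it; you have correctly located where the difficulty lies, but have not resolved it.
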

\section{Acnowledgements}
The authors want to thank Violetta Weger for useful discussions and suggestions. The second author is thankful to the SNSF grant number 161757.

\bibliographystyle{plain}
\bibliography{biblio}{}
	
\end{document}